\newtheorem{theorem}{Theorem}[section]
\newtheorem{proposition}[theorem]{Proposition}
\theoremstyle{definition} \theoremstyle{remark}
\numberwithin{equation}{section}
\date{}
\begin{document}



\title{{\bf Uniform convergence of conditional distributions for one-dimensional diffusion processes}}
\setcounter{footnote}{-1}
\author{Guoman He$^{1}$\thanks{\small\footnotesize{$^{1}$School of Mathematics and Statistics, Hunan University of Technology and Business, Changsha, Hunan 410205, PR China.
Email address: hgm0164@163.com}},~~~~~~
\setcounter{footnote}{-1}
Hanjun Zhang$^{2}$\thanks{\footnotesize {$^{2}$School of Mathematics and Computational Science, Xiangtan University, Xiangtan, Hunan 411105, PR China.
Email address: hjz001@xtu.edu.cn}}
\\[1.8mm]}

\baselineskip 0.23in

\maketitle

\begin{abstract}
In this paper, we study the quasi-stationary behavior of the one-dimensional diffusion process with a regular or exit boundary at 0 and an entrance boundary at $\infty$. By using the Doob's $h$-transform, we show that the conditional distribution of the process converges to its unique quasi-stationary distribution exponentially fast in the total variation norm, uniformly with respect to the initial distribution. Moreover, we also use the same method to show that the conditional distribution of the process converges exponentially fast in the $\psi$-norm to the unique quasi-stationary distribution. The rate of convergence of the conditional empirical measure to the quasi-ergodic distribution is also considered.
Finally, two examples arising in population dynamics are also given to illustrate the main results.
\\[2mm]
{\bf Keywords:} One-dimensional diffusion processes; rate of convergence; quasi-stationary distributions; Doob's $h$-transform
\\[1.2mm]
{\bf 2010 MSC:} Primary 60J60; Secondary 60B10, 37A25
\end{abstract}

\baselineskip 0.234in
\section{Introduction}
\label{sect1}

Consider a one-dimensional diffusion process $X=(X_{t})_{t\geq0}$ on $[0,\infty)$ killed at 0, that is, for all $t\geq s$, if $X_{s}=0$, then $X_{t}=0$, given by the solution to the stochastic differential equation (SDE)
\begin{equation}
\label{1.1}
dX_t=dB_t-q(X_t)dt,~~~~~~~~~~X_{0}=x>0,
\end{equation}
where $(B_t)_{t\geq0}$ is a standard one-dimensional Brownian motion and the drift $q$ is continuously differentiable on $(0,\infty)$, that is, $q\in C^1((0,\infty))$.
\par
Let $T_0:=\inf\{t\geq0:X_t=0\}$ be the first hitting time of $0$. We write $\mathbb{P}_{x}$ for the probability measure of the process when $X_{0}=x$.
In this paper, we assume that absorption at 0 is {\em certain}, that is, for all $x>0$,
\begin{equation}
\label{1.2}
\mathbb{P}_x(T_{0}<\infty)=1.
\end{equation}
\par
The concept of the {\em stationarity} will no longer be appropriate to depict the asymptotic behavior of such a process conditioned on long-term survival. A natural concept will be the {\em quasi-stationarity}. The main concept of this theory is the {\em quasi-stationary distribution}, that is, a probability measure $\alpha$ on $(0,\infty)$ such that, for any $t\geq0$,
\begin{equation}
\label{1.3}
\mathbb{P}_{\alpha}(X_{t}\in\cdot|T_{0}>t)=\alpha,
\end{equation}
where $\mathbb{P}_{\alpha}(\cdot)=\int_{0}^{\infty}\mathbb{P}_{x}(\cdot)\alpha(dx)$. To be a quasi-stationary distribution, one possible approach is to look at the {\em quasi-limiting distribution} $\alpha$, that is, a probability measure $\alpha$ on $(0,\infty)$ such that there exists an initial distribution $\mu$ on $(0,\infty)$,
\begin{equation}
\label{1.4}
\lim\limits_{t\rightarrow\infty}\mathbb{P}_{\mu}(X_{t}\in\cdot|T_{0}>t)=\alpha.
\end{equation}
If (\ref{1.4}) holds, we also say that $\mu$ is attracted to $\alpha$, or we say that $\mu$ is in the domain of attraction of $\alpha$. For a general overview on quasi-stationarity, the readers are urged to refer to \cite{CMS13,MV12}.
\par
When studying quasi-stationary distributions of one-dimensional diffusion processes, the boundaries of an interval often need to be classified: {\em regular boundary}, {\em exit boundary}, {\em entrance
boundary} and {\em natural boundary}. To do so, let us introduce some notations. For $x\in(0,\infty)$, we define the {\em scale function} $\Lambda$ and the {\em speed measure} $m$ of the process $X$ respectively by
\begin{equation}
\label{1.5}
\Lambda(x)=\int_{1}^{x}e^{Q(y)}dy,~~~~~m(dy)=e^{-Q(y)}dy,
\end{equation}
where $Q(y)=\int_{1}^{y}2q(x)dx$.  Using the speed measure $m$ and the scale function $\Lambda$, the boundaries of $(0,\infty)$ can be
classified. Let $a=0$ or $\infty$ and fix a point $b\in(0,\infty)$. Set
\begin{equation}
\label{1.6}
I(a)=\int_{b}^{a}d\Lambda(y)\int_{b}^{y}dm(z),~~J(a)=\int_{b}^{a}dm(y)\int_{b}^{y}d\Lambda(z).
\end{equation}
According to \cite{I06}, the boundary $a$ can be classified as follows:
\begin{equation*}
   {\rm The ~boundary}~ a ~{\rm is~ said~ to~ be}~\left\{\begin{array}{ll} {\rm regular} &{\rm if}\ I(a)<\infty ~~{\rm and}~~ J(a)<\infty,\\ {\rm exit} &{\rm if}\ I(a)<\infty~~{\rm and}~~J(a)=\infty,\\  {\rm entrance} &{\rm if}\ I(a)=\infty~~{\rm and}~~J(a)<\infty,\\ {\rm natural} &{\rm if}\ I(a)=\infty~~{\rm and}~~J(a)=\infty. \end{array}\right.
\end{equation*}
\par
When $0$ is an exit boundary, the speed measure $m$ is an infinite measure. At this time, it is a great challenge to prove the existence of a quasi-stationary distribution, due to it is difficult to guarantee $\eta_1\in\mathbb{L}^1(m)$, which is the basic condition for constructing a quasi-stationary distribution. Here, $\eta_1$ is the eigenfunction corresponding to the first non-trivial eigenvalue of the operator of the process $X$. In this case, Cattiaux et~al.\cite{CCLMMS09} had proved the existence and uniqueness of quasi-stationary distributions for the process $X$ under a set of assumptions which are mainly to ensure that the spectrum of the infinitesimal operator is discrete and $\eta_1\in\mathbb{L}^1(m)$.
In 2012, Littin \cite{LJ12} gave a relaxed condition. Under the sole condition that $\infty$ is an entrance boundary, He proved the existence and uniqueness of a quasi-stationary distribution and that of the Yaglom limit for the process $X$. Similarly, under the same condition that $\infty$ is an entrance boundary, Hening and Kolb \cite{HK19} also demonstrated the existence and uniqueness of the quasi-stationary distribution and showed that this distribution attracts all initial distributions with support on $(0,\infty)$ in 2019. When $0$ is a regular boundary, the speed measure $m$ is a finite measure, and then it is easy to get $\eta_1\in\mathbb{L}^1(m)$. In this case, many scholars study quasi-stationary distributions of one-dimensional diffusion processes with a natural or entrance boundary at $\infty$. For example, Mart\'{\i}nez et~al.\cite{MPM98} gave an optimal sufficient condition for an initial measure to be in the domain of attraction of a quasi-stationary distribution for the Brownian
motion with strictly negative constant drift. Lladser and San Mart\'{\i}n \cite{LM00} studied the domain of attraction of continuum family of quasi-stationary distributions for the Ornstein-Uhlenbeck process. Kolb and Steinsaltz \cite{KS12} gave some criteria for determining whether
one-dimensional diffusions with killing converges to a quasi-stationary distribution for all cases in which the bottom of the spectrum does not coincide with the limit of the killing rate at $\infty$. Zhang and He \cite{ZH16} studied the existence, uniqueness and domain of attraction of quasi-stationary distributions for one-dimensional diffusions when $0$ is a regular boundary and $\infty$ is an entrance boundary.
O\c{c}afrain \cite{O20} studied the polynomial rate of convergence to the Yaglom limit of
Brownian motion with strictly negative constant drift in 1-Wasserstein distance. Yamato \cite{YK22,YK24} studied the non-minimal quasi-stationary distributions of one-dimensional diffusions via the first hitting uniqueness and the renewal dynamical approach. Fang and Mao \cite{FM26} studied the rate of convergence to the Yaglom limit for the radial Ornstein-Uhlenbeck processes with negative drift in the weighted total variation norm.
\par
This paper is closely related to \cite{CCLMMS09, HK19, KS12, LJ12, ZH16}. These papers only consider the existence, uniqueness and domain of attraction of quasi-stationary distributions. However, this does not imply that (\ref{1.4}) is uniform convergence. The reverse is true. This paper, we are interested in the speed of convergence of (\ref{1.4}). More formally, we are looking forward to the existence of a quasi-stationary distribution $\alpha$ on $(0,\infty)$ such that,
for all probability measure $\mu$ on $(0,\infty)$ and all $t\geq0$, there exist two positive constants $C, \gamma$ such that
\begin{equation}
\label{1.20}
\left\|\mathbb{P}_{\mu}(X_t\in \cdot|T_{0}>t)-\alpha\right\|_{TV}\leq Ce^{-\gamma t}.
\end{equation}
Here, $\|\cdot\|_{TV}$ is the {\em total variation norm}, defined by
\begin{equation*}
\|\nu-\mu\|_{TV}:=\sup\limits_{g\in\mathcal{B}_{1}(0,\infty)}|\nu(g)-\mu(g)|,
\end{equation*}
where $\nu,\mu$ are any two probability measures on $(0,\infty)$, $\mu(g)=\int_{0}^{\infty}g(x)\mu(dx)$ and $\mathcal {B}_{1}(0,\infty)$ is the set of the measurable bounded functions defined on $(0,\infty)$ such that $\|g\|_{\infty}\leq1$. Here, $\|g\|_{\infty}=\sup\limits_{x\in(0,\infty)}|g(x)|$. The total variation norm has been used by many scholars to quantify (\ref{1.4}) (see, e.g., \cite{CV16, CV17, CV18, HZ22, O21}). For general Markov processes, the exponential convergence (\ref{1.20}) contains many interesting properties. One is that it implies the process admits a unique quasi-stationary distribution and all initial distributions are in the domain of attraction of this unique quasi-stationary distribution. Another is that it implies the $Q$-process (the process conditioned to never be absorbed) is exponential ergodic. See \cite{CV16, CV17b, HZY21} for other more detailed results. For the one-dimensional diffusion process $X$, under our conditions, we point out that we can use another method to directly prove that the $Q$-process is strongly ergodic. For a one-dimensional diffusion in natural scale, the exponential convergence (\ref{1.20}) has already been proved by Champagnat and Villemonais \cite{CV17,CV18} through probabilistic methods. But for the one-dimensional diffusion process $X$ with a regular or exit boundary at $0$ and an entrance boundary at $\infty$, it is an open question that (\ref{1.4}) is whether exponential convergence in the total variation norm \cite{CV18}. We give an affirmative answer. In this case, we can use the Doob's $h$-transform (or $h$-transform) to ensure the exponential convergence to a unique quasi-stationary distribution in the total variation norm, uniformly with respect to the initial distribution. Moreover, we also use this method to show that the conditional distribution of the process converges exponentially fast in the $\psi$-norm to the unique quasi-stationary distribution. This method fundamentally differ from \cite{CV17,CV18}, which has been successfully applied to the birth and death process by the authors \cite{HZ22}.
\par
The following result is one of our main results.
\begin{theorem}
\label{thm2.1}
Let $X$ be a one-dimensional diffusion process satisfying $(\ref{1.2})$. Assume that $0$ is a regular or exit boundary. Then, the following are equivalent$:$
\par $(\mathrm{i})$~$\infty$ is an entrance boundary.
\par $(\mathrm{ii})$~There is precisely one quasi-stationary distribution.
\par $(\mathrm{iii})$~There exist a probability measure $\alpha$ on $(0,\infty)$ and two positive constants $C, \gamma$ such that, for all initial distributions $\mu$ on $(0,\infty)$ and all $t\geq0$,
\begin{equation*}
\left\|\mathbb{P}_{\mu}(X_t\in \cdot|T_{0}>t)-\alpha\right\|_{TV}\leq Ce^{-\gamma t}.
\end{equation*}
Moreover, the distribution $\alpha$ in $(\mathrm{iii})$ is the unique quasi-stationary distribution for the process $X$.
\end{theorem}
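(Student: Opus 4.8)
\emph{Strategy.} The plan is to prove the cycle $(\mathrm{i})\Rightarrow(\mathrm{iii})\Rightarrow(\mathrm{ii})\Rightarrow(\mathrm{i})$, and then to read off from the last argument that the $\alpha$ of $(\mathrm{iii})$ is the unique quasi-stationary distribution. Two links are soft. For $(\mathrm{iii})\Rightarrow(\mathrm{ii})$: applying $(\mathrm{iii})$ with $\mu=\alpha$, together with the classical fact that a quasi-limiting distribution supported in $(0,\infty)$ is a quasi-stationary distribution, shows $\alpha$ is one; and if $\beta$ is any quasi-stationary distribution, then $\mathbb{P}_{\beta}(X_t\in\cdot|T_0>t)=\beta$ for all $t$, which with $(\mathrm{iii})$ forces $\beta=\alpha$, so there is exactly one. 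For $(\mathrm{ii})\Rightarrow(\mathrm{i})$ I would argue by contraposition: under $(\ref{1.2})$ the boundary $\infty$ is necessarily natural or entrance, and when $\infty$ is natural the classical description of quasi-stationary distributions for one-dimensional diffusions (see \cite{CCLMMS09,KS12}) shows that the set of quasi-stationary distributions is either empty or an uncountable one-parameter family, so $(\mathrm{ii})$ fails.

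\emph{The core implication $(\mathrm{i})\Rightarrow(\mathrm{iii})$, first half.} Using the entrance assumption at $\infty$ and the results recalled in Section~\ref{sect1}, one first produces the principal spectral data: $\lambda_1>0$ and $\eta_1\in C^2((0,\infty))$ with $\eta_1>0$, $\eta_1(0+)=0$, $\tfrac12\eta_1''-q\eta_1'=-\lambda_1\eta_1$, $\eta_1\in\mathbb{L}^1(m)\cap\mathbb{L}^2(m)$, and, because $\infty$ is entrance, $\eta_1$ bounded on $(0,\infty)$ with $\eta_1(\infty):=\lim_{x\to\infty}\eta_1(x)\in(0,\infty)$; then $\alpha(dx):=\eta_1(x)m(dx)/\int\eta_1\,dm$ is a quasi-stationary distribution with $\mathbb{P}_{\alpha}(T_0>t)=e^{-\lambda_1 t}$, and this is the $\alpha$ claimed in $(\mathrm{iii})$. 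Next, perform Doob's $h$-transform with $h=\eta_1$: setting $\mathbb{P}^h_x(A)=e^{\lambda_1 t}\eta_1(x)^{-1}\mathbb{E}_x[\mathbf{1}_A\,\eta_1(X_t)\mathbf{1}_{\{T_0>t\}}]$ for $A\in\mathcal F_t$ defines, by the eigenrelation, a conservative diffusion $Y=(Y_t)$ on $(0,\infty)$ with generator $\tfrac12 f''+(\eta_1'/\eta_1-q)f'$, speed measure $\eta_1^2\,m$ and scale $\int\eta_1^{-2}\,d\Lambda$; since $\eta_1\in\mathbb{L}^2(m)$, the probability $\nu(dx):=\eta_1(x)^2 m(dx)/\int\eta_1^2\,dm$ is reversible for $Y$. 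From $\eta_1(0+)=0$ and the boundary behaviour of $\eta_1$ near $0$ one checks that $0$ is an entrance boundary for $Y$, and from $\eta_1(\infty)\in(0,\infty)$ that $\infty$ remains an entrance boundary for $Y$ (the quantities $\tilde I(\infty),\tilde J(\infty)$ defined as in $(\ref{1.6})$ for $Y$ being comparable to $I(\infty),J(\infty)$).

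\emph{Second half.} Since both boundaries of $Y$ are entrance, $\sup_{x}\mathbb{E}^h_x[\sigma_K]<\infty$ for a fixed compact $K\subset(0,\infty)$, where $\sigma_K=\inf\{t\ge0:Y_t\in K\}$; with the strict positivity of transition densities on compacts this yields a uniform Doeblin minorization $\mathbb{P}^h_x(Y_{t_0}\in\cdot)\ge\delta\varphi$ valid for all $x$, hence $D,\rho>0$ with $\|\mathbb{P}^h_{\xi}(Y_t\in\cdot)-\nu\|_{TV}\le De^{-\rho t}$ uniformly over all initial distributions $\xi$. A direct change-of-measure computation then gives, for every probability $\mu$ on $(0,\infty)$ and every $g$ with $\|g\|_\infty\le1$,
\[
\mathbb{P}_{\mu}(X_t\in g|T_0>t)=\frac{\mathbb{E}^h_{\tilde\mu}[g(Y_t)/\eta_1(Y_t)]}{\mathbb{E}^h_{\tilde\mu}[1/\eta_1(Y_t)]},\qquad \tilde\mu(dx):=\frac{\eta_1(x)\mu(dx)}{\int\eta_1\,d\mu},
\]
while $\alpha(g)=\nu(g/\eta_1)/\nu(1/\eta_1)$ with $\nu(1/\eta_1)=\int\eta_1\,dm\big/\int\eta_1^2\,dm>0$. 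The delicate point is that $1/\eta_1$ is unbounded near $0$; this I would handle via the identity $P^h_s(g/\eta_1)(z)=e^{\lambda_1 s}\eta_1(z)^{-1}(P_s g)(z)$, where $(P_s)$ is the killed semigroup of $X$, so that $\|P^h_s(g/\eta_1)\|_\infty\le\|g\|_\infty C_s$ with $C_s:=e^{\lambda_1 s}\sup_z\eta_1(z)^{-1}\mathbb{P}_z(T_0>s)$, and $C_s<\infty$ for every fixed $s>0$ because $\eta_1(z)^{-1}\mathbb{P}_z(T_0>s)$ stays bounded as $z\to0$ (numerator and denominator vanishing at comparable rates) and as $z\to\infty$ (where $\eta_1$ is bounded below). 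Fixing $s=1$, applying the Markov property at time $t-1$ and the ergodic bound for $Y$ yields $|\mathbb{E}^h_{\tilde\mu}[g(Y_t)/\eta_1(Y_t)]-\nu(g/\eta_1)|\le C_1 De^{-\rho(t-1)}$ uniformly in $\mu$ and $g$, and likewise for the denominator (take $g\equiv1$). As $\nu(1/\eta_1)>0$ is a fixed constant, the denominator exceeds $\tfrac12\nu(1/\eta_1)$ for all $t\ge t_*$ with $t_*$ independent of $\mu$, and an elementary estimate for the ratio then gives $\|\mathbb{P}_{\mu}(X_t\in\cdot|T_0>t)-\alpha\|_{TV}\le C'e^{-\rho t}$ for $t\ge t_*$, while the trivial bound $\|\cdot\|_{TV}\le2$ covers $t\le t_*$; this is $(\mathrm{iii})$ with $\gamma=\rho$, and then $(\mathrm{iii})$, used as in the first paragraph, identifies $\alpha$ as the unique quasi-stationary distribution.

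\emph{Main obstacle.} The difficulty is concentrated in two places. First, verifying that the $h$-transformed diffusion $Y$ still has two entrance boundaries: this is precisely where the entrance assumption on $X$ at $\infty$ and the sharp boundary behaviour of $\eta_1$ at $0$ (distinguishing the regular and exit cases, the latter with $m$ an infinite measure) genuinely enter. Second, taming the singularity of the weight $1/\eta_1$ at $0$ through the finiteness of $C_s$; this requires quantitative control of both $\eta_1(z)$ and $\mathbb{P}_z(T_0>s)$ as $z\to0$. Once these two points are settled, the transfer identity and the remaining estimates are routine.
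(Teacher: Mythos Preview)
Your strategy coincides with the paper's: pass to the $Q$-process $Y$ via the $\eta_1$-Doob transform, establish strong ergodicity of $Y$, and convert the uniform ergodic estimate for $Y$ into the desired exponential bound for the conditioned law of $X$ through the ratio identity
\[
\mathbb{E}_{\mu}[g(X_t)\mid T_0>t]=\frac{(\eta_1\circ\mu)\widetilde P_t[g/\eta_1]}{(\eta_1\circ\mu)\widetilde P_t[1/\eta_1]}.
\]
The easy implications are organised differently but equivalently: the paper simply cites \cite{KS12,HK19} for $(\mathrm{i})\Leftrightarrow(\mathrm{ii})$ and \cite{CV16} for $(\mathrm{iii})\Rightarrow(\mathrm{ii})$, whereas you sketch direct arguments.

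Two substantive differences are worth noting. First, the paper does not try to classify the boundary $0$ for $Y$; instead it obtains strong ergodicity from the spectral gap $\widetilde\lambda_1=\lambda_2-\lambda_1>0$ (inherited through the $h$-transform), together with the verification that $\infty$ is still an entrance boundary for $Y$, and then invokes Mao's criterion \cite{M02}. Your route via ``both boundaries of $Y$ are entrance, hence Doeblin'' reaches the same conclusion, but the claim that $0$ is entrance for $Y$ is exactly the place where the regular/exit dichotomy and the precise asymptotics of $\eta_1$ at $0$ must be worked out; you flag this as an obstacle, and it is one---the paper sidesteps it by the spectral-gap argument.

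Second, and this is to your credit, you face squarely the issue that $g/\eta_1$ is unbounded (since $\eta_1(0{+})=0$), so the total-variation bound for $Y$ does not apply to it directly. Your smoothing device, writing $\widetilde P_t(g/\eta_1)=\widetilde P_{t-1}\bigl(\widetilde P_1(g/\eta_1)\bigr)$ and observing that $\widetilde P_1(g/\eta_1)(z)=e^{\lambda_1}\eta_1(z)^{-1}P_1g(z)$ is bounded by $C_1\|g\|_\infty$ with $C_1=e^{\lambda_1}\sup_z\eta_1(z)^{-1}\mathbb{P}_z(T_0>1)<\infty$, is exactly what is needed to make the passage from (\ref{3.1}) to an estimate of the form of (\ref{3.2}) rigorous; the paper writes that step without comment. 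The remainder---bounding the ratio once numerator and denominator are each within $O(e^{-\gamma t})$ of their limits---is handled the same way in both proofs.
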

\par
For any measurable function $\psi:(0,\infty)\rightarrow[1,+\infty)$, we define the $\psi$-norm of a signed measure $\nu$ by $\|\nu\|_{\psi}=\sup\limits_{|g|\leq\psi}|\nu(g)|$. We can see that when $\psi=1$, it is nothing but the total variation norm. We denote by $\|\cdot\|_{2}$ the $\mathbb{L}^2(\beta)$-norm, defined as $\|g\|_{2}=(\int_{0}^{\infty}g^{2}(x)\beta(dx))^{\frac{1}{2}}$, where $\beta$ is the unique invariant probability measure of the $Q$-process $Y$ defined in Section \ref{sect2}.
For any positive measure $\mu$ on $(0,\infty)$ and any measurable function $g$ on $(0,\infty)$ satisfying $\mu(g)<\infty$, we denote by $g\circ\mu$ the probability measure defined as
\begin{equation}
\label{1.7}
g\circ\mu(dx):=\frac{g(x)\mu(dx)}{\mu(g)}.
\end{equation}
We denote by $\mathbb{E}_{\mu}$ and $\mathbb{E}_{x}$ the expectation with respect to $\mathbb{P}_{\mu}$ and $\mathbb{P}_{x}$ respectively. For any two probability measures $\mu, \nu$ on $(0,\infty)$, $\mu\ll\nu$ denotes $\mu$ is absolutely continuous with respect to $\nu$.
\par
The second main result of this paper is the following stronger convergence result.
\begin{theorem}
\label{thm2.2}
Let $X$ be a one-dimensional diffusion process satisfying $(\ref{1.2})$. Assume that $0$ is a regular or exit boundary, $\infty$ is an entrance boundary and there exists a function $\psi:(0,\infty)\rightarrow[1,+\infty)$ such that $\alpha(\psi)<+\infty$ and $\alpha(\psi^{2}/\eta_1)<+\infty$, where $\alpha$ is the unique quasi-stationary distribution of the  process $X$. Then, for any initial distribution $\mu$ on $(0,\infty)$ satisfying $\mu\ll\alpha$, there exist $t_{\mu}$ and $\gamma>0$ such that, for any $t\geq t_{\mu}$,
\begin{equation*}
\sup\limits_{|g|\leq\psi}|\mathbb{E}_{\mu}[g(X_{t})|T_{0}>t]-\alpha(g)|\leq \max\{D_{1}, D_{2}\}\left[\frac{\alpha(\frac{\psi^2}{\eta_1})}{m(\eta_{1})}\right]^{\frac{1}{2}}\|\frac{d(\eta_1\circ\mu)}{d(\eta_1\circ\alpha)}-1\|_{2}e^{-\gamma t},
\end{equation*}
where
\begin{eqnarray*}
D_{1}=\left(1+\frac{1+\alpha(\psi)}{1-c}\right),~~~D_{2}=2+\alpha(\psi)~~~\mathrm{and}~~~c\in(0,1).
\end{eqnarray*}
\end{theorem}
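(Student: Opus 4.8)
The plan is to strip off the conditioning by transferring the problem to the $Q$-process $Y$ via Doob's $h$-transform with $h=\eta_1$, to invoke the $\mathbb{L}^2(\beta)$-spectral gap of $Y$, and then to convert the resulting $\mathbb{L}^2$-control of a numerator and a denominator into a bound in the $\psi$-norm. Let $(P_t)_{t\ge0}$ denote the sub-Markov semigroup of $X$, let $\lambda_1>0$ be the principal eigenvalue with eigenfunction $\eta_1>0$, and normalise $\eta_1$ so that $\int_0^\infty\eta_1^2\,dm=1$ (equivalently $\beta(dx)=\eta_1^2(x)\,m(dx)$, and then $\beta=\eta_1\circ\alpha$). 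For any $g$ with $|g|\le\psi$, writing the conditional expectation as a ratio and using the $h$-transform identity $P_tf=e^{-\lambda_1t}\,\eta_1\,\widetilde P_t(f/\eta_1)$, where $(\widetilde P_t)_{t\ge0}$ is the conservative, $\beta$-reversible semigroup of $Y$, the factors $e^{-\lambda_1t}$ and $\mu(\eta_1)$ cancel between numerator and denominator, so that with $\nu:=\eta_1\circ\mu$,
\[
\mathbb{E}_\mu[g(X_t)\mid T_0>t]=\frac{\mu(P_tg)}{\mu(P_t\mathbbm{1})}=\frac{\nu\big(\widetilde P_t(g/\eta_1)\big)}{\nu\big(\widetilde P_t(1/\eta_1)\big)} .
\]
Taking $\mu=\alpha$ recovers the quasi-stationarity identity $\alpha(g)=\beta(g/\eta_1)/\beta(1/\eta_1)$ from the invariance of $\beta$ under $(\widetilde P_t)$.

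Next I would use the exponential ergodicity of $Y$ in $\mathbb{L}^2(\beta)$. Since $0$ is regular or exit and $\infty$ is an entrance boundary, the generator of $X$ has purely discrete $\mathbb{L}^2(m)$-spectrum $0<\lambda_1<\lambda_2<\cdots$; hence the generator of the $\beta$-symmetric process $Y$ has spectral gap $\gamma:=\lambda_2-\lambda_1>0$, and $\|\widetilde P_tf-\beta(f)\|_2\le e^{-\gamma t}\|f\|_2$ for every $f\in\mathbb{L}^2(\beta)$. Setting $\rho:=d\nu/d\beta=d(\eta_1\circ\mu)/d(\eta_1\circ\alpha)$, which is well defined because $\mu\ll\alpha$, and using $\beta(\rho-1)=0$ together with Cauchy--Schwarz, one gets $|\nu(\widetilde P_tf)-\beta(f)|\le e^{-\gamma t}\|f\|_2\,\|\rho-1\|_2$ for all $f\in\mathbb{L}^2(\beta)$. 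Applying this with $f=g/\eta_1$ (for $|g|\le\psi$) and with $f=1/\eta_1$ — both functions being dominated in modulus by $\psi/\eta_1$, for which $\|\psi/\eta_1\|_2^2=\int_0^\infty(\psi^2/\eta_1^2)\,\eta_1^2\,dm=m(\psi^2)=m(\eta_1)\,\alpha(\psi^2/\eta_1)$ — bounds the deviations of both the numerator and the denominator in the previous display by
\[
R_t:=\big(m(\eta_1)\,\alpha(\psi^2/\eta_1)\big)^{1/2}e^{-\gamma t}\|\rho-1\|_2 .
\]

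Finally I would pass from the ratio to the $\psi$-norm. Write $a=\nu(\widetilde P_t(g/\eta_1))$, $b=\nu(\widetilde P_t(1/\eta_1))$, $a_0=\beta(g/\eta_1)$ and $b_0=\beta(1/\eta_1)=m(\eta_1)$, so that $a_0/b_0=\alpha(g)$, $|a|\le\alpha(\psi)\,b_0+R_t$, and $|a-a_0|\le R_t$, $|b-b_0|\le R_t$. Since $b\to b_0>0$ as $t\to\infty$, there is $t_\mu$ — depending on $\|\rho-1\|_2$, hence on $\mu$ — such that $R_t\le c\,b_0$, and therefore $b\ge(1-c)b_0$, for all $t\ge t_\mu$. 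Decomposing
\[
\frac ab-\alpha(g)=\frac{a-a_0}{b_0}+\frac{a(b_0-b)}{b\,b_0}
\]
and inserting $|a-a_0|\le R_t$, $|a|\le\alpha(\psi)b_0+R_t$, $b\ge(1-c)b_0$ and $R_t/b_0\le c<1$ yields, after collecting terms,
\[
\Big|\frac ab-\alpha(g)\Big|\le\Big(1+\frac{1+\alpha(\psi)}{1-c}\Big)\frac{R_t}{b_0}=D_1\Big[\frac{\alpha(\psi^2/\eta_1)}{m(\eta_1)}\Big]^{1/2}e^{-\gamma t}\|\rho-1\|_2 ,
\]
using $R_t/b_0=[\alpha(\psi^2/\eta_1)/m(\eta_1)]^{1/2}e^{-\gamma t}\|\rho-1\|_2$. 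A slightly coarser regrouping of the same three error contributions (retaining $b$ rather than $b_0$ in the first term) produces the companion constant $D_2=2+\alpha(\psi)$; keeping whichever of the two bounds is smaller and then taking the supremum over $|g|\le\psi$ gives the asserted inequality with $\max\{D_1,D_2\}$.

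The step I expect to be the main obstacle is the exit case, where the speed measure $m$ is infinite: then $1/\eta_1\notin\mathbb{L}^2(\beta)$ in general and the crude $\mathbb{L}^2$-estimates of both numerator and denominator diverge. The hypothesis $\alpha(\psi^2/\eta_1)<+\infty$ — together with $\psi\ge1$, which gives $1/\eta_1\le\psi/\eta_1$ — is precisely what restores $\psi/\eta_1\in\mathbb{L}^2(\beta)$ and makes $R_t$ finite, so this is the assumption that carries the argument. A subsidiary difficulty is that the lower bound $b\ge(1-c)b_0$ cannot be made uniform over all initial distributions $\mu$, which is why the estimate is only claimed for $t\ge t_\mu$; and one must verify that the $\mathbb{L}^2(\beta)$-spectral gap of $Y$ really is available, which is where the entrance assumption at $\infty$ (discreteness of the spectrum of $X$) is used.
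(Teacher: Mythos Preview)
Your proposal is correct and follows essentially the same route as the paper: transfer to the $Q$-process via the $\eta_1$-transform, express the conditional expectation as the ratio $(\eta_1\circ\mu)\widetilde P_t(g/\eta_1)\big/(\eta_1\circ\mu)\widetilde P_t(1/\eta_1)$, control numerator and denominator via the $\mathbb{L}^2(\beta)$-exponential convergence of $(\widetilde P_t)$ combined with Cauchy--Schwarz (using exactly the hypothesis $\alpha(\psi^2/\eta_1)<\infty$ to place $\psi/\eta_1$ in $\mathbb{L}^2(\beta)$), and then do elementary algebra on the ratio to extract $D_1$ and $D_2$. The only notable deviations are cosmetic: you invoke the spectral gap $\gamma=\lambda_2-\lambda_1$ directly, whereas the paper reaches $\mathbb{L}^2$-exponential convergence via strong ergodicity of $Y$ (Proposition~\ref{prop 2.2}) and a citation to Chen; and your single decomposition $a/b-\alpha(g)=(a-a_0)/b_0+a(b_0-b)/(bb_0)$ already yields the $D_1$-bound on the absolute value (note $D_1\ge D_2$ for every $c\in(0,1)$, so $\max\{D_1,D_2\}=D_1$), whereas the paper obtains $D_1$ and $D_2$ separately from upper and lower sandwich estimates as in its proof of Theorem~\ref{thm2.1}.
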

As an application of the main results, we obtain the following result. This result implies that when $t$ goes to infinity, the polynomial convergence of conditional distributions $\frac{1}{t}\int_{0}^{t}\mathbb{P}_{\mu}(X_s\in \cdot|T_{0}>t)ds$ toward the quasi-ergodic distribution $\beta$, which improves the result obtained by He \cite[Theorem 3.1]{H18} who only gave a weak convergence result. A probability measure $\beta$ on $(0,\infty)$ is said to be a {\em quasi-ergodic distribution}, if for all $x\in(0,\infty)$, $t>0$ and all bounded measurable function $g$ on $(0,\infty)$,
\begin{equation*}
\lim_{t\rightarrow\infty}\mathbb{E}_{x}\left(\frac{1}{t}\int_{0}^{t}g(X_s)ds|T_{0}>t\right)=\beta(g).
\end{equation*}
There is an essential differences between a quasi-stationary distribution and a quasi-ergodic distribution. For work on this topic,
we refer the reader to \cite{CV17b,H18, HZ16, HZY21} and the references therein.
\begin{proposition}
\label{pro1.333}
Let $X$ be a one-dimensional diffusion process satisfying $(\ref{1.2})$. Assume that $0$ is a regular or exit boundary and $\infty$ is an entrance boundary. Then, there exists a positive constant $G$ such that, for all $x\in(0,\infty)$, $t>0$ and all bounded measurable function $g$ on $(0,\infty)$,
\begin{equation*}
\left|\mathbb{E}_{x}\left(\frac{1}{t}\int_{0}^{t}g(X_s)ds|T_{0}>t\right)-\beta(g)\right|\leq \frac{G\|g\|_{\infty}}{t},
\end{equation*}
where $\beta$ is as in $(\ref{2.4})$.
\end{proposition}
The rest of this paper is arranged as follows. In Section \ref{sect2}, we present some preliminaries that will be needed in
the proof of our main results. In Section \ref{sect3}, we are devoted to giving the proof of Theorem \ref{thm2.1}. We are devoted to giving the proof of Theorem \ref{thm2.2} and Proposition \ref{pro1.333} in Section \ref{sect4} and Section \ref{sect5} respectively. Two examples are given to study in Section \ref{sect6}.

\section{Preliminaries}
\label{sect2}
In this section, we mainly present some known results on the spectrum of the generator and quasi-stationary distributions of the process $X$ and discuss some properties of the $Q$-process. Associated to the process $X$, we consider the sub-Markovian semigroup $(P_{t})_{t\geq0}$, that is, $0\leq P_{t}g\leq1$ $m\text{-}a.e.$ if $0\leq g\leq1$, given by
\begin{equation*}
P_{t}g(x)=\mathbb{E}_x[g(X_t){\bf{1}}_{\{T_{0}>t\}}].
\end{equation*}
The generator of this semigroup is given by $\mathcal {L}=\frac{1}{2}\frac{d^{2}}{d{x}^{2}}-q\frac{d}{dx}$.
\par
To ensure the existence of a quasi-stationary distribution, the bottom of the spectrum of the generator is often necessary to be strictly positive and the lowest eigenfunction is integrable with respect to the reference measure. On the spectrum of $\mathcal {L}$ and the quasi-stationary distribution for the process $X$, we summarize the results proved in \cite{HK19, LJ12, KS12, ZH16} as follows. If $0$ is a regular boundary and $\infty$ is an entrance boundary, then it can be found in \cite{KS12} and \cite{ZH16}. When $0$ is an exit boundary and $\infty$ is an entrance boundary, it mainly comes from \cite{HK19} and \cite{LJ12}.
\begin{proposition}
\label{prop 2.1}
For the one-dimensional diffusion process $X$ satisfying $(\ref{1.2})$, if $0$ is a regular or exit boundary and $\infty$ is an entrance boundary, then
\par $(\mathrm{i})$ $-\mathcal {L}$ has a purely discrete spectrum. The eigenvalues $0<\lambda_1<\lambda_2<\cdots$ are simple, $\lim_{n\rightarrow\infty}\lambda_n=+\infty$, and the eigenfunction $\eta_n$ associated to $\lambda_n$ has exactly $n$ roots belonging to $(0,\infty)$. The eigenfunction sequence $(\eta_n)_{n\geq1}$ is an orthonormal basis of $\mathbb{L}^2(m)$ and $\eta_1$ can be taken to be strictly positive on $(0,\infty)$.
\par $(\mathrm{ii})$ for any $n\geq1, \eta_n\in \mathbb{L}^1(m)$.
\par $(\mathrm{iii})$ for all $x > 0$, $t > 0$ and all bounded measurable function $g$ on $(0,\infty)$, there exists some
density $r(t,x,\cdot)$ satisfying
\begin{equation}
\label{2.1}
\mathbb{E}_x[g(X_t){\bf{1}}_{\{T_{0}>t\}}]=\int_{0}^{\infty}r(t,x,y)g(y)m(dy).
\end{equation}
Moreover, for all $x>0$ and $t>0$, $r(t,x,\cdot)\in\mathbb{L}^2(m)$.
\par $(\mathrm{iv})$ there is precisely one quasi-stationary distribution $\alpha$ for the process $X$, given by
\begin{equation}
\label{2.2}
\alpha(dx)=\frac{\eta_{1}(x)m(dx)}{m(\eta_{1})}.
\end{equation}
Moreover, $\alpha$ attracts all initial distributions $\mu$ on $(0,\infty)$. Also, for any $x\in(0,\infty)$ and any Borel subset $B\subseteq(0,\infty)$,
\begin{equation*}
\lim\limits_{t\rightarrow\infty}e^{\lambda_{1}t}\mathbb{P}_{x}(T_{0}>t)=\eta_{1}(x)m(\eta_{1}),
\end{equation*}
\begin{equation*}
\lim\limits_{t\rightarrow\infty}e^{\lambda_{1}t}\mathbb{P}_{x}(X_{t}\in B, T_{0}>t)=\alpha(B)m(\eta_{1}).
\end{equation*}
\end{proposition}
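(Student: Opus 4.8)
\emph{Proof strategy.} The plan is to recover the proposition from the spectral theory of the self-adjoint realization of $-\mathcal{L}$ on $\mathbb{L}^2(m)$, combined with the probabilistic analyses of \cite{KS12, ZH16} (when $0$ is regular) and \cite{HK19, LJ12} (when $0$ is exit); the point to check is that, under (\ref{1.2}), the hypotheses imposed in those works amount exactly to ``$\infty$ is an entrance boundary.'' Writing the eigenvalue equation $\mathcal{L}\eta=-\lambda\eta$ in Feller form $\frac{1}{2}\frac{d}{dm}\frac{d\eta}{d\Lambda}=-\lambda\eta$: the endpoint $0$ being regular or exit makes it a limit-circle/regular endpoint at which killing imposes the Dirichlet condition $\eta(0^{+})=0$, while $\infty$ being entrance ($I(\infty)=\infty$, $J(\infty)<\infty$) is a limit-point endpoint and, crucially, forces the resolvent of $-\mathcal{L}$ to be Hilbert--Schmidt, hence compact --- this is the classical discreteness criterion near an entrance boundary. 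Compactness together with self-adjointness yields a purely discrete spectrum with simple eigenvalues $\lambda_1<\lambda_2<\cdots\to\infty$ (with $\lambda_1\ge 0$ since $-\mathcal{L}\ge 0$ under the Dirichlet condition) and an orthonormal eigenbasis $(\eta_n)_{n\ge1}$ of $\mathbb{L}^2(m)$; the simplicity and the root-counting/oscillation properties of the $\eta_n$ (in particular $\eta_1$ having no zero, so it may be chosen strictly positive) are Sturm oscillation theory at the regular endpoint $0$. That $\lambda_1>0$ uses (\ref{1.2}): if $\lambda_1=0$ then $\eta_1$ would be a positive $\mathcal{L}$-harmonic function with $P_t\eta_1=\eta_1$, forcing $\mathbb{P}_x(T_0=\infty)>0$, a contradiction. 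This gives (i).

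The integrability statement (ii) is where the real work lies, and it is the step I expect to be the main obstacle. Near $\infty$ it is automatic: eigenfunctions at an entrance boundary decay rapidly (the same bound that makes the resolvent Hilbert--Schmidt), so $\int^{\infty}|\eta_n|\,dm<\infty$. Near $0$ there is nothing to check when $0$ is regular, since then $m$ is a finite measure there. The delicate case is $0$ exit, where $m$ is an infinite measure near $0$ and one must show that the Dirichlet condition and the structure of the ODE force $\eta_n(y)\to0$ fast enough as $y\to0$ to overcome the blow-up of $m$; I would establish the required boundary asymptotics of $\eta_n$ as in \cite{HK19, LJ12} (and in the spirit of \cite{CCLMMS09}), which ultimately rests on comparison with the ground state and on the certainty of absorption (\ref{1.2}). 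For (iii), the $C^{1}$ regularity of the coefficients of $\mathcal{L}$ on $(0,\infty)$ gives, by parabolic regularity, a transition sub-density $r(t,x,\cdot)$ with respect to $m$, i.e.\ (\ref{2.1}); the spectral expansion $r(t,x,y)=\sum_{n\ge1}e^{-\lambda_n t}\eta_n(x)\eta_n(y)$ (convergent since the $\lambda_n$ grow) shows $r(t,x,\cdot)\in\mathbb{L}^2(m)$, or directly $\|r(t,x,\cdot)\|_{\mathbb{L}^2(m)}^{2}=r(2t,x,x)<\infty$ by Chapman--Kolmogorov and symmetry of $r$.

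Finally, for (iv): formula (\ref{2.2}) is checked by hand --- self-adjointness of $P_t$ on $\mathbb{L}^2(m)$ gives $\alpha P_t=e^{-\lambda_1 t}\alpha$, so $\mathbb{P}_{\alpha}(X_t\in\cdot\,|\,T_0>t)=\alpha$ and $\alpha$ is a quasi-stationary distribution. For uniqueness, an arbitrary quasi-stationary distribution $\nu$ satisfies $\nu P_t=e^{-\theta t}\nu$ for some $\theta>0$, and by (iii) it has a density with respect to $m$ that must be a nonnegative eigenfunction of $-\mathcal{L}$; by the sign properties in (i) this density is a multiple of $\eta_1$, whence $\nu=\alpha$ and $\theta=\lambda_1$. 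The domain-of-attraction claim and the two limits then follow by expanding $\mathbb{P}_x(T_0>t)=\sum_{n}e^{-\lambda_n t}\eta_n(x)m(\eta_n)$ and $\mathbb{P}_x(X_t\in B,\,T_0>t)=\sum_{n}e^{-\lambda_n t}\eta_n(x)\int_B\eta_n\,dm$ --- both legitimate thanks to (ii) --- isolating the $n=1$ terms and controlling the remainders with the spectral gap $\lambda_2-\lambda_1>0$. These last computations I would only sketch, as they are routine once (i)--(iii) are in place.
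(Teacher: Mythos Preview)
Your proposal is correct in spirit and aligns with the paper's approach, but note that the paper does not actually give a proof of this proposition at all: it is stated as a compilation of known results, with the sentence preceding it saying explicitly that ``we summarize the results proved in \cite{HK19, LJ12, KS12, ZH16},'' splitting according to whether $0$ is regular (then \cite{KS12, ZH16}) or exit (then \cite{HK19, LJ12}). Your outline is therefore not an alternative route but rather a reconstruction of what those cited papers do --- and you cite exactly the same sources at the corresponding steps. In that sense the two ``proofs'' coincide.

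One small caution on your sketch: the claim that ``$\infty$ entrance forces the resolvent of $-\mathcal{L}$ to be Hilbert--Schmidt'' is a little too quick as stated. The compactness criterion involves the behavior at \emph{both} endpoints; when $0$ is exit one has $J(0)=\infty$, so the standard Hilbert--Schmidt test in terms of $J$ alone does not apply directly, and the discreteness of the spectrum in that case is obtained in \cite{LJ12, HK19} by a more careful argument (essentially the one you allude to under (ii), using the boundary asymptotics forced by the Dirichlet condition and sure absorption). This is exactly why the paper cites different references for the regular and exit cases rather than giving a single unified spectral argument. Apart from this slight over-compression, your outline is accurate and covers what is needed.
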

\par
Now, we introduce the $Q$-process, which is defined as a Doob's $h$-transform of the semigroup $(P_{t})_{t\geq0}$. Here, we take $h=\eta_1$. In other words, we describe the law of the process $X$ conditioned to never be absorbed, usually called the $Q$-process. We denote by $Y=(Y_{t})_{t\geq0}$ the $Q$-process. For the process $X$ satisfying $(\ref{1.2})$, if $0$ is a regular or exit boundary and $\infty$ is an entrance boundary, then we can use the same proof as in \cite[Corollary 6.1]{CCLMMS09} to show that the $Q$-process $Y$ exists. In fact, the key elements of the proof of \cite[Corollary 6.1]{CCLMMS09} only need to know that $-\mathcal {L}$ has a purely discrete spectrum, $\eta_1\in \mathbb{L}^1(m)$ and the last part of Proposition \ref{prop 2.1} holds. More precisely, if $0$ is a regular or exit boundary and $\infty$ is an entrance boundary, then for all $x>0$, $s\geq0$ and all $A$ Borel measurable subsets of $C([0, s])$,
\begin{equation*}
\mathbb{Q}_{x}(Y\in A)=\lim_{t\to\infty}\mathbb{P}_{x}(X\in A|T_{0}>t)
\end{equation*}
is well-defined, and the process $Y$ is a diffusion process on $(0,\infty)$ with transition probability densities (with respect to the Lebesgue measure) given by
\begin{equation*}
\widetilde{r}(s,x,y)=e^{\lambda_{1}s}\frac{\eta_{1}(y)}{\eta_{1}(x)}r(s,x,y)e^{-Q(y)},
\end{equation*}
that is, $\mathbb{Q}_{x}$ is locally absolutely continuous with respect to $\mathbb{P}_{x}$ and
\begin{equation}
\label{2.3}
\mathbb{Q}_{x}(Y\in A)=\mathbb{E}_{x}\left({\bf{1}}_{A}(X)e^{\lambda_{1}s}\frac{\eta_{1}(X_{s})}{\eta_{1}(x)}, T_{0}>s\right).
\end{equation}
The equality (\ref{2.3}) implies that
\begin{equation}
\label{2.5}
\widetilde{P}_{t}g(x)=\frac{e^{\lambda_{1}t}}{\eta_{1}(x)}P_{t}(\eta_{1}g)(x),
\end{equation}
where $(\widetilde{P}_{t})_{t\geq0}$ denotes the semigroup of $Y$. From (\ref{2.5}), we obtain
\begin{equation}
\label{2.6}
P_{t}g(x)=\eta_{1}(x)e^{-\lambda_{1}t}\widetilde{P}_{t}(\frac{g}{\eta_{1}})(x).
\end{equation}
So, it can be seen that the process $Y$ is a Doob's $h$-transform of the process $X$. The Doob's $h$-transform has a lot of nice properties. First, (\ref{2.6}) naturally implies that the quasi-stationarity of the process $X$ can be studied by using the Doob's $h$-transform. In fact, this method has been used by many scholars successfully to discuss the quasi-stationarity for some classes of Markov processes (see, e.g., \cite{DL15, HZ22, O20,O21,T19}). Besides, another useful piece of information is that the spectrum is invariant under Doob's $h$-transform  (see \cite[Chapter 4, Sections 3 and 10]{PRG95}).
\par
Under our assumptions, the same proof as in \cite[Corollary 6.2]{CCLMMS09} works, we know that the process $Y$ is an ergodic diffusion process on $(0,\infty)$ and admits a unique invariant probability measure
\begin{equation}
\label{2.4}
\beta(dx)=\eta^2_{1}(x)m(dx).
\end{equation}
In fact, we can show that the process $Y$ is strongly ergodic, that is, there exists a constant $\gamma>0$ such that
\begin{equation*}
\lim\limits_{t\rightarrow\infty}e^{\gamma t}\sup\limits_{x\in(0,\infty)}\left\|\widetilde{P}_{t}(x,\cdot)-\beta\right\|_{TV}=0,
\end{equation*}
where $\widetilde{P}_{t}(x,dy)$ is the transition function of the process $Y$.
\begin{proposition}
\label{prop 2.2}
For the one-dimensional diffusion process $X$ satisfying $(\ref{1.2})$, if $0$ is a regular or exit boundary and $\infty$ is an entrance boundary, then the process $Y$ is strongly ergodic.
\end{proposition}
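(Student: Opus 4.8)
The plan is to realise $Y$ as a one-dimensional diffusion whose scale and speed data are explicit in terms of $\eta_1$, to show that both of its boundary points are entrance, and then to invoke the classical hitting-time characterisation of strong (i.e.\ uniform) ergodicity for one-dimensional diffusions. From $(\ref{2.5})$ and $\mathcal L\eta_1=-\lambda_1\eta_1$ one computes the generator of $Y$,
\[
\widetilde{\mathcal L}g=\frac1{\eta_1}(\mathcal L+\lambda_1)(\eta_1 g)=\tfrac12 g''+\Bigl(\tfrac{\eta_1'}{\eta_1}-q\Bigr)g',
\]
so $Y$ is a regular diffusion on $(0,\infty)$ whose scale function and speed measure may be taken to be $d\widetilde\Lambda(x)=\eta_1(x)^{-2}\,d\Lambda(x)$ and $d\widetilde m(x)=\eta_1(x)^2\,dm(x)=\beta(dx)$, the latter being a probability measure since $\|\eta_1\|_{\mathbb L^2(m)}=1$. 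Let $\widetilde I(a),\widetilde J(a)$, $a\in\{0,\infty\}$, denote the integrals of $(\ref{1.6})$ built from $\widetilde\Lambda,\widetilde m$. Because $Y$ is $X$ conditioned never to hit $0$ and $\infty$ is already inaccessible for $X$, both $0$ and $\infty$ are inaccessible for $Y$, so $\widetilde I(0)=\widetilde I(\infty)=\infty$; it therefore suffices to prove $\widetilde J(0)<\infty$ and $\widetilde J(\infty)<\infty$, i.e.\ that $0$ and $\infty$ are \emph{entrance} boundaries for $Y$. Indeed, for an ergodic one-dimensional diffusion with finite speed measure one then has $\sup_{x\in(0,\infty)}\mathbb E^{\mathbb Q}_x[\tau_b]<\infty$ for the first hitting time $\tau_b$ of a fixed interior point $b$ (the left, resp.\ right, mean hitting time being finite and uniformly bounded precisely when the corresponding boundary is entrance), and combining this with a Harnack-type lower bound for the densities $q(t,x,\cdot)$ on a fixed compact interval gives the stated uniform exponential ergodicity.

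It is convenient to read off the needed facts about $\eta_1$ from the eigenequation in the scale variable, in which form $\eta_1>0$ is strictly increasing and concave as a function of $\Lambda$ (since $\tfrac{d}{dm}\tfrac{d\eta_1}{d\Lambda}=-2\lambda_1\eta_1<0$), with $\eta_1\to0$ at the regular/exit endpoint $0$ (where also $\Lambda(0+)>-\infty$). Near $0$: because $\eta_1\in\mathbb L^1(m)$ by Proposition~\ref{prop 2.1}(ii), $\tfrac{d\eta_1}{d\Lambda}$ has a finite positive limit $c_0$ at $\Lambda(0+)$, so concavity and $\eta_1(\Lambda(0+))=0$ give $c_1\,(\Lambda(x)-\Lambda(0+))\le\eta_1(x)\le c_0\,(\Lambda(x)-\Lambda(0+))$ on $(0,b]$ for some $0<c_1\le c_0$. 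Substituting into $\widetilde J(0)=\int_0^b\bigl(\int_y^b\eta_1(z)^{-2}d\Lambda(z)\bigr)\eta_1(y)^2\,dm(y)$ and using $\int_y^b(\Lambda(z)-\Lambda(0+))^{-2}d\Lambda(z)\le(\Lambda(y)-\Lambda(0+))^{-1}$, the integrand is dominated by a constant multiple of $(\Lambda(y)-\Lambda(0+))\,dm(y)$, so $\widetilde J(0)\le C\,I(0)<\infty$ since $0$ is regular or exit.

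The main obstacle is the boundary at $\infty$, where $m([b,\infty))<\infty$, $\Lambda(\infty)=\infty$ and $J(\infty)=\int_b^\infty m([z,\infty))\,d\Lambda(z)<\infty$. Writing $\tfrac{d\eta_1}{d\Lambda}(\Lambda)=\ell+2\lambda_1\int_x^\infty\eta_1\,dm$ with $\ell:=\lim_{\Lambda\to\infty}\tfrac{d\eta_1}{d\Lambda}\ge0$, there are two cases. If $\ell>0$, then $\eta_1(z)\ge(\ell/2)\Lambda(z)$ for $z$ large, hence $\widetilde\Lambda(\infty)=\int^\infty\eta_1^{-2}\,d\Lambda<\infty$, and since $\widetilde m([b,\infty))=\int_b^\infty\eta_1^2\,dm<\infty$ one gets at once $\widetilde J(\infty)\le(\widetilde\Lambda(\infty)-\widetilde\Lambda(b))\,\widetilde m([b,\infty))<\infty$. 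If $\ell=0$, a tangent-line estimate (concavity of $\eta_1$ in $\Lambda$) together with $N(x):=\int_x^\infty(\Lambda(z)-\Lambda(x))\,dm(z)=\int_x^\infty m([z,\infty))\,d\Lambda(z)\to0$ as $x\to\infty$ yields $\tfrac{d}{d\Lambda}\log\eta_1(\Lambda)\le C\,m([x,\infty))$ for $x$ near $\infty$; integrating in $\Lambda$ and using $\int_b^\infty m([z,\infty))\,d\Lambda(z)=J(\infty)<\infty$ shows $\eta_1$ is bounded on $(0,\infty)$, so $\eta_1(b)\le\eta_1\le\|\eta_1\|_\infty$ on $[b,\infty)$ and
\[
\widetilde J(\infty)=\int_b^\infty\Bigl(\int_z^\infty\eta_1(v)^2\,dm(v)\Bigr)\eta_1(z)^{-2}\,d\Lambda(z)\le\frac{\|\eta_1\|_\infty^2}{\eta_1(b)^2}\int_b^\infty m([z,\infty))\,d\Lambda(z)=\frac{\|\eta_1\|_\infty^2}{\eta_1(b)^2}\,J(\infty)<\infty.
\]
Either way $\widetilde J(\infty)<\infty$, so both boundaries of $Y$ are entrance and $\sup_x\mathbb E^{\mathbb Q}_x[\tau_b]<\infty$; hence $Y$ is strongly ergodic.

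The point at which I expect the work to concentrate is precisely the boundary behaviour of $\eta_1$: boundedness of $\eta_1$ near the entrance boundary $\infty$ (the logarithmic-derivative bound, which closes only because $\int_b^\infty m([z,\infty))\,d\Lambda(z)=J(\infty)<\infty$ is the entrance condition for $X$) and the sharp rate $\eta_1(x)\asymp\Lambda(x)-\Lambda(0+)$ at the regular/exit boundary $0$. Once these are established, translating $I(0)<\infty$ and $J(\infty)<\infty$ for $X$ into $\widetilde J(0)<\infty$ and $\widetilde J(\infty)<\infty$ for $Y$ is a routine Fubini computation, and the reduction of strong ergodicity to the finiteness of the two hitting-time integrals $\widetilde J(0),\widetilde J(\infty)$ is classical for one-dimensional diffusions.
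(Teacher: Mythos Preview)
Your argument is correct and ends at the same classical criterion the paper uses (both boundaries of $Y$ are entrance, hence $\sup_x\mathbb E^{\mathbb Q}_x[\tau_b]<\infty$ and Mao's theorem \cite{M02} applies), but the route differs in two places.

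Near $0$: the paper does not analyse $\eta_1$ directly. Instead it observes that the spectrum is invariant under the $h$-transform, so the spectral gap of $\widetilde{\mathcal L}$ is $\lambda_2-\lambda_1>0$; by Chen's variational criterion \cite{MFC00,PRG09} this forces $\widetilde\delta:=\sup_{b>0}\int_0^b\eta_1^{-2}\,d\Lambda\int_b^\infty\eta_1^2\,dm<\infty$, whence $\int_0^b\eta_1^{-2}\,d\Lambda<\infty$ and the contribution $J_1:=\int_0^b\eta_1^{-2}(y)\bigl(\int_y^\infty\eta_1^2\,dm\bigr)d\Lambda(y)$ is finite. Your asymptotic $\eta_1\asymp\Lambda-\Lambda(0+)$ (from concavity in $\Lambda$, $\eta_1(0+)=0$, and $\eta_1\in\mathbb L^1(m)$ giving a finite positive left-slope) reaches the same conclusion by reducing $\widetilde J(0)$ to $I(0)$; this is more self-contained, while the paper's detour through the spectral gap is shorter because it cashes in Proposition~\ref{prop 2.1}(i).

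Near $\infty$: the paper simply quotes that $\eta_1$ is increasing \cite{LJ12} and bounded \cite{TM19,HZ16}, then bounds $\widetilde J(\infty)$ exactly as in your $\ell=0$ case, namely $\widetilde J(\infty)\le J_1+\|\eta_1\|_\infty^2\eta_1(b)^{-2}J(\infty)$. Your tangent-line/log-derivative inequality recovers boundedness of $\eta_1$ directly from the entrance condition $J(\infty)<\infty$, so you avoid the external citation; incidentally this shows your case $\ell>0$ is empty. Thus your proof trades the two black boxes (spectral invariance plus Chen's criterion; boundedness of $\eta_1$) for explicit eigenfunction estimates, at the cost of a little more computation.
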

\begin{proof}
First, note that the process $Y$ is the unique solution to the following SDE
\begin{equation}
\label{2.7}
dY_t=dB_t-\left(q(Y_t)-\frac{\eta'_{1}(Y_t)}{\eta_{1}(Y_t)}\right)dt,~~~~~~~~~~Y_{0}=y>0.
\end{equation}
Thus, we define $\widetilde{Q}(y)$ analogous to $Q(y)$:
\begin{equation*}
\widetilde{Q}(y)=\int_{1}^{y}2\widetilde{q}(x)dx=\int_{1}^{y}2\left(q(x)-\frac{\eta'_{1}(x)}{\eta_{1}(x)}\right)dx=Q(y)-2\log\frac{\eta_{1}(y)}{\eta_{1}(1)}.
\end{equation*}
Let $\widetilde{\mathcal {L}}$ denotes the generator of the semigroup $(\widetilde{P}_{t})_{t\geq0}$ of the process $Y$. Then, for any real-valued function $g$ on $(0,\infty)$, we have
\begin{equation*}
\widetilde{\mathcal {L}}g(x)=\frac{1}{\eta_{1}(x)}(\mathcal {L}+\lambda_{1})(\eta_{1}g)(x).
\end{equation*}
We denote by $\widetilde{\lambda}_{1}$ the first non-trivial eigenvalue of the operator $\widetilde{\mathcal {L}}$. Note that the spectrum of  $\widetilde{\mathcal {L}}$ and the spectrum of $\mathcal {L}$ is invariant under Doob's $h$-transform. Based on the formula above and Proposition \ref{prop 2.1}, we get that $\widetilde{\lambda}_{1}=\lambda_{2}-\lambda_{1}>0$. In addition, according to \cite{MFC00, PRG09}, $\widetilde{\lambda}_{1}>0$ if and only if
\begin{equation*}
\widetilde{\delta}:=\sup\limits_{b>0}\int_{0}^{b}e^{\widetilde{Q}(y)}dy\int_{b}^{\infty}e^{-\widetilde{Q}(z)}dz
=\sup\limits_{b>0}\int_{0}^{b}\frac{e^{Q(y)}}{\eta^{2}_{1}(y)}dy\int_{b}^{\infty}e^{-Q(z)}\eta^{2}_{1}(z)dz<\infty.
\end{equation*}
So, for $b\in(0,\infty)$, one has $\int_{0}^{b}\frac{e^{Q(y)}}{\eta^{2}_{1}(y)}dy<\infty$. Thus
\begin{equation*}
J_{1}:=\int_{0}^{b}\frac{e^{Q(y)}}{\eta^{2}_{1}(y)}\left(\int_{y}^{\infty}e^{-Q(z)}\eta^{2}_{1}(z)dz\right)dy<\infty.
\end{equation*}
Moreover, according to \cite{LJ12}, $\eta_{1}(x)$ is an increasing function on $(0,\infty)$. Also, we know from \cite[Theorem 5.4]{TM19} or \cite[Proposition 2.3]{HZ16} that $\eta_{1}(x)$ is bounded on $(0,\infty)$. Therefore, we have
\begin{eqnarray*}
\widetilde{I}(\infty)&:=&\int_{0}^{\infty}d\widetilde{\Lambda}(y)\int_{0}^{y}d\widetilde{m}(z)\\
                     &\geq&\int_{b}^{\infty}d\widetilde{\Lambda}(y)\int_{b}^{y}d\widetilde{m}(z)\\
                     &=&\int_{b}^{\infty}e^{\widetilde{Q}(y)}\left(\int_{b}^{y}e^{-\widetilde{Q}(z)}dz\right)dy\\
                     &=&\int_{b}^{\infty}\frac{e^{Q(y)}}{\eta^{2}_{1}(y)}\left(\int_{b}^{y}e^{-Q(z)}\eta^{2}_{1}(z)dz\right)dy\\
                     &\geq&\int_{b}^{\infty}e^{Q(y)}\left(\int_{b}^{y}e^{-Q(z)}dz\right)dy\\
                     &=&I(\infty)\\
                     &=&\infty
\end{eqnarray*}
and
\begin{eqnarray*}
\widetilde{J}(\infty)&:=&\int_{0}^{\infty}d\widetilde{m}(y)\int_{0}^{y}d\widetilde{\Lambda}(z)\\
                     &=&\int_{0}^{\infty}e^{-\widetilde{Q}(y)}\left(\int_{0}^{y}e^{\widetilde{Q}(z)}dz\right)dy\\
                     &=&\int_{0}^{\infty}e^{\widetilde{Q}(y)}\left(\int_{y}^{\infty}e^{-\widetilde{Q}(z)}dz\right)dy\\
                     &=&J_{1}+\int_{b}^{\infty}\frac{e^{Q(y)}}{\eta^{2}_{1}(y)}\left(\int_{y}^{\infty}e^{-Q(z)}\eta^{2}_{1}(z)dz\right)dy\\
                     &\leq&J_{1}+\frac{\|\eta_{1}\|_{\infty}^{2}}{\eta^{2}_{1}(b)}\int_{b}^{\infty}e^{Q(y)}\left(\int_{y}^{\infty}e^{-Q(z)}dz\right)dy\\
                     &=&J_{1}+\frac{\|\eta_{1}\|_{\infty}^{2}}{\eta^{2}_{1}(b)}{J}(\infty)\\
                     &<&\infty.
\end{eqnarray*}
Thus, according to \cite[Theorem 2.1]{M02}, the process $Y$ is
strongly ergodic. This completes the proof of Proposition \ref{prop 2.2}.
\end{proof}
\par
In this paper, as mentioned above, it is natural for us to use the Doob's $h$-transform to study the quasi-stationarity of one-dimensional diffusion processes. We point out that Proposition \ref{prop 2.2} plays an important key role in the proof of our main results. It also has an independent interest. Cattiaux et~al.\cite[Corollary 6.2]{CCLMMS09} just proved that the process $Y$ is ordinary ergodic.

\section{Proof of Theorem \ref{thm2.1}}
\label{sect3}
If $0$ is a regular boundary, it's a well-known fact that $(i)$ and $(ii)$ are equivalent (see \cite[Theorem 4.14]{KS12}), and if $0$ is an exit boundary, $(i)$ and $(ii)$ are also equivalent (see \cite[Corollary 1.14]{HK19}). Now let's turn our attention to prove that $(i)$ and $(iii)$ are equivalent. If $(iii)$ holds, then we know from \cite[Theorem 2.1]{CV16} that there is precisely one quasi-stationary distribution for $X$, and that the distribution $\alpha$ defined in (\ref{2.2}) is the unique quasi-stationary distribution. Therefore, $(i)$ holds.
\par
Conversely, if $(i)$ holds, first note that the measure $\beta$ is a reversible measure for the semigroup $(\widetilde{P}_{t})_{t\geq0}$. In fact, if $f,g\in\mathbb{L}^2(m)$, so that $f,g\in\mathbb{L}^2(\beta)$, then we have
\begin{eqnarray*}
\int_{0}^{\infty}(\widetilde{P}_{t}f)gd\beta&=&\int_{0}^{\infty}\eta^{2}_1(\frac{e^{\lambda_{1}t}}{\eta_1}{P}_{t}\eta_1f)gdm\\
                                            &=&\int_{0}^{\infty}\eta^{2}_1f(\frac{e^{\lambda_{1}t}}{\eta_1}{P}_{t}\eta_1g)dm\\
                                            &=&\int_{0}^{\infty}f(\widetilde{P}_{t}g)d\beta.
\end{eqnarray*}
Moreover, according to Proposition \ref{prop 2.2}, the $Q$-process $Y$ is strongly ergodic, consequently from \cite[Theorem 1.9]{C05} we know that  the $Q$-process $Y$ is $\mathbb{L}^1$-exponentially convergent, that is, there exist constants $1\leq\theta<\infty$ and $\gamma>0$ such that, for all $g\in\mathbb{L}^1(\beta)$ and $t\geq0$,
\begin{equation}
\label{3.1}
\|\widetilde{P}_{t}g-\beta(g)\|_{1}\leq \theta \|g-\beta(g)\|_{1}e^{-\gamma t},
\end{equation}
where $\|\cdot\|_{1}$ denotes the $\mathbb{L}^1(\beta)$-norm.
\par
From Proposition \ref{prop 2.1}, we know that $\eta_1\in \mathbb{L}^1(m)$. So, for all $g\in\mathcal{B}_{1}(0,\infty)$, it is easy to see that $\frac{g}{\eta_1}\in\mathbb{L}^1(\beta)$.
Thus, for all probability measure $\mu$ on $(0,\infty)$, $g\in\mathcal{B}_{1}(0,\infty)$ and all $t\geq0$, we have
\begin{eqnarray*}
\mathbb{E}_{\mu}[g(X_{t})|T_{0}>t]&=&\frac{\int_{0}^{\infty}\mathbb{E}_{x}[g(X_{t}){\bf{1}}_{\{T_{0}>t\}}]\mu(dx)}{\int_{0}^{\infty}\mathbb{E}_{x}[{\bf{1}}_{\{T_{0}>t\}}]\mu(dx)}\\
                                   &=&\frac{\int_{0}^{\infty}\frac{e^{\lambda_{1}t}}{\eta_{1}(x)}\mathbb{E}_{x}[\eta_{1}(X_{t})\frac{g(X_{t})}{\eta_{1}(X_{t})}{\bf{1}}_{\{T_{0}>t\}}]\eta_{1}(x)\mu(dx)}{\int_{0}^{\infty}\frac{e^{\lambda_{1}t}}{\eta_{1}(x)}\mathbb{E}_{x}[\eta_{1}(X_{t})\frac{\bf{1}}{\eta_{1}(X_{t})}{\bf{1}}_{\{T_{0}>t\}}]\eta_{1}(x)\mu(dx)}\\
                                   &=&\frac{\int_{0}^{\infty}\widetilde{P}_{t}[\frac{g}{\eta_{1}}](x)\eta_{1}(x)\mu(dx)}{\int_{0}^{\infty}\widetilde{P}_{t}[\frac{\bf{1}}{\eta_{1}}](x)\eta_{1}(x)\mu(dx)}\\
                                   &=&\frac{(\eta_{1}\circ\mu)\widetilde{P}_{t}[\frac{g}{\eta_{1}}]}{(\eta_{1}\circ\mu)\widetilde{P}_{t}[\frac{\bf{1}}{\eta_{1}}]}.
\end{eqnarray*}
Here, $\bf{1}={\bf{1}}_{(0,\infty)}$.
\par
By (\ref{2.2}) and (\ref{2.4}), we have
\begin{equation*}
\beta(\frac{g}{\eta_{1}})=m(\eta_{1})\alpha(g).
\end{equation*}
So, for any $g\in\mathcal{B}_{1}(0,\infty)$, by (\ref{3.1}), there exists a constant $C'>0$ such that,
\begin{equation}
\label{3.2}
|(\eta_{1}\circ\mu)\widetilde{P}_{t}(\frac{g}{\eta_1})-m(\eta_{1})\alpha(g)|= |(\eta_{1}\circ\mu)\widetilde{P}_{t}(\frac{g}{\eta_{1}})-\beta(\frac{g}{\eta_{1}})|\leq C'e^{-\gamma t}
\end{equation}
and
\begin{equation}
\label{3.3}
|(\eta_{1}\circ\mu)\widetilde{P}_{t}(\frac{\bf{1}}{\eta_1})-m(\eta_{1})|\leq C'e^{-\gamma t}.
\end{equation}
Set $C_{1}:=\frac{C'}{m(\eta_{1})}$. Then, by (\ref{3.2}) and (\ref{3.3}), for any $t>\frac{\log C_{1}}{\gamma}$, one has
\begin{equation}
\label{3.4}
\frac{\alpha(g)-C_{1}e^{-\gamma t}}{1+C_{1}e^{-\gamma t}}\leq\mathbb{E}_{\mu}[g(X_{t})|T_{0}>t]\leq\frac{\alpha(g)+C_{1}e^{-\gamma t}}{1-C_{1}e^{-\gamma t}}.
\end{equation}
In addition, for any $t>\frac{\log C_{1}}{\gamma}$ and $g\in\mathcal {B}_{1}(0,\infty)$, we have
\begin{eqnarray*}
\frac{\alpha(g)+C_{1}e^{-\gamma t}}{1-C_{1}e^{-\gamma t}}&=&(\alpha(g)+C_{1}e^{-\gamma t})\left(1+\frac{C_{1}e^{-\gamma t}}{1-C_{1}e^{-\gamma t}}\right)\\
                                                 &\leq&\alpha(g)+C_{1}e^{-\gamma t}+(1+1)\frac{C_{1}e^{-\gamma t}}{1-C_{1}e^{-\gamma t}}\\
                                                 &=&\alpha(g)+C_{1}e^{-\gamma t}\left(1+\frac{2}{1-C_{1}e^{-\gamma t}}\right).
\end{eqnarray*}
We can pick any value smaller than 1, denoted by $d$, such that $C_{1}e^{-\gamma t}<d$. So
\begin{eqnarray*}
\frac{\alpha(g)+C_{1}e^{-\gamma t}}{1-C_{1}e^{-\gamma t}}&\leq&\alpha(g)+C_{2}e^{-\gamma t},
\end{eqnarray*}
where
\begin{eqnarray*}
C_{2}:=C_{1}\left(1+\frac{2}{1-d}\right).
\end{eqnarray*}
In a same way, for any $t>\frac{\log C_{1}}{\gamma}$ and $g\in\mathcal {B}_{1}(0,\infty)$, one can obtain
\begin{eqnarray*}
\frac{\alpha(g)-C_{1}e^{-\gamma t}}{1+C_{1}e^{-\gamma t}}&=&(\alpha(g)-C_{1}e^{-\gamma t})\left(1-\frac{C_{1}e^{-\gamma t}}{1+C_{1}e^{-\gamma t}}\right)\\
                                                 &\geq&\alpha(g)-C_{1}e^{-\gamma t}-(\alpha(g)+C_{1}e^{-\gamma t})\frac{C_{1}e^{-\gamma t}}{1+C_{1}e^{-\gamma t}}\\
                                                 &\geq&\alpha(g)-3C_{1}e^{-\gamma t}.
\end{eqnarray*}
Hence, for any $t>\frac{\log C_{1}}{\gamma}$, we obtain
\begin{eqnarray*}
\sup\limits_{g\in\mathcal {B}_{1}(0,\infty)}|\mathbb{E}_{\mu}[g(X_{t})|T_{0}>t]-\alpha(g)|\leq\max\{3C_{1}, C_{2}\}e^{-\gamma t}.
\end{eqnarray*}
This ends the proof of Theorem \ref{thm2.1}.

\section{Proof of Theorem \ref{thm2.2}}
\label{sect4}
The proof is similar to that of Theorem \ref{thm2.1}. First, by (\ref{1.7}), (\ref{2.2}) and (\ref{2.4}), one has
\begin{equation*}
\eta_1\circ\alpha(dx)=\beta(dx).
\end{equation*}
It can be seen that if $\|\frac{d(\eta_1\circ\mu)}{d\beta}-1\|_{2}=+\infty$, then the theorem is trivially true. So, we just need to consider initial distribution $\mu$ on $(0,\infty)$ such that $\|\frac{d(\eta_1\circ\mu)}{d\beta}-1\|_{2}<+\infty$.
\par
Since $\alpha(\psi^{2}/\eta_1)<+\infty$, for any measurable function $g$ on $(0,\infty)$ such that $|g|\leq\psi$, we have
\begin{equation*}
\|\frac{g}{\eta_1}\|_{2}\leq\left[m(\eta_{1})\alpha(\frac{\psi^2}{\eta_1})\right]^{\frac{1}{2}}<+\infty,
\end{equation*}
that is, $\frac{g}{\eta_1}\in\mathbb{L}^2(\beta)$. Moreover, note that the measure $\beta$ is a reversible measure for the semigroup $(\widetilde{P}_{t})_{t\geq0}$. Thus, for any initial distribution $\mu$ on $(0,\infty)$ satisfying $\mu\ll\alpha$, we have
\begin{eqnarray*}
|\mu \widetilde{P}_{t}[\frac{g}{\eta_1}]-m(\eta_{1})\alpha(g)|&=&|\mu \widetilde{P}_{t}[\frac{g}{\eta_1}]-\beta(\frac{g}{\eta_1})|\\
                                                              &=&\left|\beta\left(\frac{d\mu}{d\beta}\widetilde{P}_{t}[\frac{g}{\eta_1}]-\frac{g}{\eta_1}\right)\right|\\
                                                              &=&\left|\beta\left(\frac{g}{\eta_1}\widetilde{P}_{t}\left(\frac{d\mu}{d\beta}\right)-\frac{g}{\eta_1}\right)\right|\\
                                                              &=&\left|\beta\left[\frac{g}{\eta_1}\left(\widetilde{P}_{t}\left(\frac{d\mu}{d\beta}-1\right)\right)\right]\right|.
\end{eqnarray*}
Further, from Proposition \ref{prop 2.2}, we know that the $Q$-process $Y$ is strongly ergodic. Then, according to \cite{C05}, $(\widetilde{P}_{t})_{t\geq0}$ has $\mathbb{L}^2$-exponential convergence, that is, there exists a positive constant $\gamma$ such that, for all $g\in\mathbb{L}^2(\beta)$ and $t\geq0$,
\begin{equation}
\label{4.1}
\|\widetilde{P}_{t}g-\beta(g)\|_{2}\leq e^{-\gamma t}\|g-\beta(g)\|_{2}.
\end{equation}
Hence, based on the Cauchy-Schwarz inequality and (\ref{4.1}), we obtain
\begin{eqnarray*}
\sup\limits_{|g|\leq\psi}|\mu \widetilde{P}_{t}[\frac{g}{\eta_1}]-m(\eta_{1})\alpha(g)|
                                                              &\leq&\left[\beta(\frac{\psi^2}{\eta^2_{1}})\right]^{\frac{1}{2}}\|\frac{d\mu}{d\beta}-1\|_{2}e^{-\gamma t}\\
                                                              &=&\left[m(\eta_{1})\alpha(\frac{\psi^2}{\eta_1})\right]^{\frac{1}{2}}\|\frac{d\mu}{d\beta}-1\|_{2}e^{-\gamma t}.
\end{eqnarray*}
Moreover, according to the proof of Theorem \ref{thm2.1}, it holds that
\begin{eqnarray*}
\mathbb{E}_{\mu}[g(X_{t})|T_{0}>t]=\frac{(\eta_{1}\circ\mu)\widetilde{P}_{t}[\frac{g}{\eta_1}]}{(\eta_{1}\circ\mu)\widetilde{P}_{t}[\frac{\bf{1}}{\eta_1}]}.
\end{eqnarray*}
Therefore, for any $t>\frac{\log[\frac{\alpha(\frac{\psi^2}{\eta_1})}{m(\eta_{1})}]^{\frac{1}{2}}\|\frac{d(\eta_{1}\circ\mu)}{d\beta}-1\|_{2}}{\gamma}$, we get
\begin{equation}
\label{4.2}
\frac{\alpha(g)-[\frac{\alpha(\frac{\psi^2}{\eta_1})}{m(\eta_{1})}]^{\frac{1}{2}}\|\frac{d(\eta_{1}\circ\mu)}{d\beta}-1\|_{2}e^{-\gamma t}}{1+[\frac{\alpha(\frac{\psi^2}{\eta_1})}{m(\eta_{1})}]^{\frac{1}{2}}\|\frac{d(\eta_{1}\circ\mu)}{d\beta}-1\|_{2}e^{-\gamma t}}
\leq\mathbb{E}_{\mu}[g(X_{t})|T_{0}>t]\leq\frac{\alpha(g)+[\frac{\alpha(\frac{\psi^2}{\eta_1})}{m(\eta_{1})}]^{\frac{1}{2}}\|\frac{d(\eta_{1}\circ\mu)}{d\beta}-1\|_{2}e^{-\gamma t}}{1-[\frac{\alpha(\frac{\psi^2}{\eta_1})}{m(\eta_{1})}]^{\frac{1}{2}}\|\frac{d(\eta_{1}\circ\mu)}{d\beta}-1\|_{2}e^{-\gamma t}}.
\end{equation}
We pick any value smaller than 1, denoted by $c$, such that $[\frac{\alpha(\frac{\psi^2}{\eta_1})}{m(\eta_{1})}]^{\frac{1}{2}}\|\frac{d(\eta_{1}\circ\mu)}{d\beta}-1\|_{2}e^{-\gamma t}<c$.
Thus, by an argument similar to that of Theorem \ref{thm2.1}, for any $t>\frac{\log[\frac{\alpha(\frac{\psi^2}{\eta_1})}{m(\eta_{1})}]^{\frac{1}{2}}\|\frac{d(\eta_{1}\circ\mu)}{d\beta}-1\|_{2}}{\gamma}$ and $|g|\leq\psi$, we have
\begin{eqnarray*}
\frac{\alpha(g)+[\frac{\alpha(\frac{\psi^2}{\eta_1})}{m(\eta_{1})}]^{\frac{1}{2}}\|\frac{d(\eta_{1}\circ\mu)}{d\beta}-1\|_{2}e^{-\gamma t}}{1-[\frac{\alpha(\frac{\psi^2}{\eta_1})}{m(\eta_{1})}]^{\frac{1}{2}}\|\frac{d(\eta_{1}\circ\mu)}{d\beta}-1\|_{2}e^{-\gamma t}}&\leq&\alpha(g)+D_{1}[\frac{\alpha(\frac{\psi^2}{\eta_1})}{m(\eta_{1})}]^{\frac{1}{2}}\|\frac{d(\eta_{1}\circ\mu)}{d\beta}-1\|_{2}e^{-\gamma t},
\end{eqnarray*}
where
\begin{eqnarray*}
D_{1}:=\left(1+\frac{1+\alpha(\psi)}{1-c}\right).
\end{eqnarray*}
Similarly, for any $t>\frac{\log[\frac{\alpha(\frac{\psi^2}{\eta_1})}{m(\eta_{1})}]^{\frac{1}{2}}\|\frac{d(\eta_{1}\circ\mu)}{d\beta}-1\|_{2}}{\gamma}$ and $|g|\leq\psi$, one has
\begin{eqnarray*}
\frac{\alpha(g)-[\frac{\alpha(\frac{\psi^2}{\eta_1})}{m(\eta_{1})}]^{\frac{1}{2}}\|\frac{d(\eta_{1}\circ\mu)}{d\beta}-1\|_{2}e^{-\gamma t}}{1+[\frac{\alpha(\frac{\psi^2}{\eta_1})}{m(\eta_{1})}]^{\frac{1}{2}}\|\frac{d(\eta_{1}\circ\mu)}{d\beta}-1\|_{2}e^{-\gamma t}}
\geq\alpha(g)-D_{2}[\frac{\alpha(\frac{\psi^2}{\eta_1})}{m(\eta_{1})}]^{\frac{1}{2}}\|\frac{d(\eta_{1}\circ\mu)}{d\beta}-1\|_{2}e^{-\gamma t},
\end{eqnarray*}
where
\begin{eqnarray*}
D_{2}:=2+\alpha(\psi).
\end{eqnarray*}
So by (\ref{4.2}), for any $t>\frac{\log[\frac{\alpha(\frac{\psi^2}{\eta_1})}{m(\eta_{1})}]^{\frac{1}{2}}\|\frac{d(\eta_{1}\circ\mu)}{d\beta}-1\|_{2}}{\gamma}$, we obtain
\begin{eqnarray*}
\sup\limits_{|g|\leq\psi}|\mathbb{E}_{\mu}[g(X_{t})|T_{0}>t]-\alpha(g)|\leq \max\{D_{1}, D_{2}\}[\frac{\alpha(\frac{\psi^2}{\eta_1})}{m(\eta_{1})}]^{\frac{1}{2}}\|\frac{d(\eta_{1}\circ\mu)}{d\beta}-1\|_{2}e^{-\gamma t}.
\end{eqnarray*}
\par
Let $$\varphi_{t}(\mu):=\mathbb{P}_{\mu}(X_{t}\in\cdot|T_{0}>t).$$
As mentioned above, we get that there exists $t_{\mu}\geq0$ such that, for any $t\geq t_{\mu}$,
\begin{equation*}
[\frac{\alpha(\frac{\psi^2}{\eta_1})}{m(\eta_{1})}]^{\frac{1}{2}}\|\frac{d(\eta_{1}\circ\varphi_{t}(\mu))}{d\beta}-1\|_{2}e^{-\gamma t}<c.
\end{equation*}
Also, based on (\ref{1.7}) and (\ref{2.5}), for any probability measure $\mu$ on $(0,\infty)$, any measurable function $g$ on $(0,\infty)$ and $t\geq0$, it is easy to show that
\begin{eqnarray*}
\eta_{1}\circ\varphi_{t}(\mu)(g)&=&\frac{\varphi_{t}(\mu)(\eta_{1}g)}{\varphi_{t}(\mu)(\eta_{1})}\\
                                &=&\frac{e^{\lambda_{1}t}\mu P_{t}(\eta_{1}g)}{\mu(\eta_{1})}\\
                                &=&(\eta_{1}\circ\mu)\widetilde{P}_{t}(g),
\end{eqnarray*}
that is,
\begin{equation}
\label{4.3}
\eta_{1}\circ\varphi_{t}(\mu)=(\eta_{1}\circ\mu)\widetilde{P}_{t}.
\end{equation}
Therefore, by (\ref{4.1}) and (\ref{4.3}), for any $t\geq t_{\mu}$, we obtain
\begin{eqnarray*}
\sup\limits_{|g|\leq\psi}|\mathbb{E}_{\mu}[g(X_{t})|T_{0}>t]-\alpha(g)|&\leq& \max\{D_{1}, D_{2}\}[\frac{\alpha(\frac{\psi^2}{\eta_1})}{m(\eta_{1})}]^{\frac{1}{2}}\|\frac{d(\eta_{1}\circ\varphi_{t_{\mu}}(\mu))}{d\beta}-1\|_{2}e^{-\gamma (t-t_{\mu})}\\
&\leq&\max\{D_{1}, D_{2}\}[\frac{\alpha(\frac{\psi^2}{\eta_1})}{m(\eta_{1})}]^{\frac{1}{2}}\|\frac{d(\eta_{1}\circ\mu)}{d\beta}-1\|_{2}e^{-\gamma t}.
\end{eqnarray*}
This ends the proof of Theorem \ref{thm2.2}.

\section{Proof of Proposition \ref{pro1.333}}
\label{sect5}
In this section, we are devoted to giving the proof of Proposition \ref{pro1.333}. Firstly, note that $\eta_{1}(x)$ is bounded on $(0,\infty)$. Also, for all $x\in(0,\infty)$, all bounded measurable function $g$ on $(0,\infty)$ and $0<p<1$, we have
\begin{eqnarray*}
&&\mathbb{E}_{x}\left[g(X_{pt})|T_{0}>t\right]-\beta(g)\\
&=&\frac{e^{\lambda_{1} t}\mathbb{E}_{x}\left[g(X_{pt}){\bf{1}}_{\{T_{0}>t\}}\right]}{\eta_{1}(x)}\cdot\frac{\eta_{1}(x)}{e^{\lambda_{1} t}\mathbb{P}_{x}(T_{0}>t)}-\frac{e^{\lambda_{1} t}\mathbb{E}_{x}\left[g(X_{pt}){\bf{1}}_{\{T_{0}>t\}}\right]}{\eta_{1}(x)}\\
&+&\frac{e^{\lambda_{1} t}\mathbb{E}_{x}\left[g(X_{pt}){\bf{1}}_{\{T_{0}>t\}}\right]}{\eta_{1}(x)}-\frac{e^{\lambda_{1} pt}\mathbb{E}_{x}\left[g(X_{pt})\eta_{1}(X_{pt}){\bf{1}}_{\{T_{0}>pt\}}\right]}{\eta_{1}(x)}\\
&+&\frac{e^{\lambda_{1} pt}\mathbb{E}_{x}\left[g(X_{pt})\eta_{1}(X_{pt}){\bf{1}}_{\{T_{0}>pt\}}\right]}{\eta_{1}(x)}-\beta(g).
\end{eqnarray*}
Moreover, according to \cite[Theorem 2.1]{CV17b}, there exists a constant $a_{1}>0$ such that, for all $t\geq0$,
\begin{equation*}
\left|e^{\lambda_{1} t}\mathbb{P}_{x}(T_{0}>t)-\eta_{1}(x)\right|\leq a_{1}e^{\lambda_{1} t}\mathbb{P}_{x}(T_{0}>t)e^{-\gamma t}.
\end{equation*}
So, we get
\begin{equation}
\label{5.1}
(1-a_{1}e^{-\gamma t})e^{\lambda_{1} t}\mathbb{P}_{x}(T_{0}>t)\leq\eta_{1}(x)\leq (1+a_{1}e^{-\gamma t})e^{\lambda_{1} t}\mathbb{P}_{x}(T_{0}>t).
\end{equation}
Thus, for $t$ large enough, we have
\begin{equation}
\label{5.2}
\frac{\eta_{1}(x)}{1+a_{1}e^{-\gamma t}}\leq e^{\lambda_{1} t}\mathbb{P}_{x}(T_{0}>t)\leq \frac{\eta_{1}(x)}{1-a_{1}e^{-\gamma t}}.
\end{equation}
From (\ref{5.2}), for all $t>\frac{\log a_{1}}{\gamma}$, we get
\begin{eqnarray*}
&&\left|\frac{e^{\lambda_{1} t}\mathbb{E}_{x}\left[g(X_{pt}){\bf{1}}_{\{T_{0}>t\}}\right]}{\eta_{1}(x)}\cdot\frac{\eta_{1}(x)}{e^{\lambda_{1} t}\mathbb{P}_{x}(T_{0}>t)}-\frac{e^{\lambda_{1} t}\mathbb{E}_{x}\left[g(X_{pt}){\bf{1}}_{\{T_{0}>t\}}\right]}{\eta_{1}(x)}\right|\\
&\leq&\|g\|_{\infty}\left|\frac{e^{\lambda_{1} t}\mathbb{P}_{x}(T_{0}>t)}{\eta_{1}(x)}\right|\left|\frac{\eta_{1}(x)}{e^{\lambda_{1} t}\mathbb{P}_{x}(T_{0}>t)}-1\right|\\
&\leq&\|g\|_{\infty}\cdot\frac{1}{1-a_{1}e^{-\gamma t}}\cdot a_{1}e^{-\gamma t}\\
&=&\frac{a_{1}e^{-\gamma t}}{1-a_{1}e^{-\gamma t}}\|g\|_{\infty}.
\end{eqnarray*}
Furthermore, for all $t>\frac{\log a_{1}}{\gamma(1-p)}$, we can deduce that
\begin{equation*}
\left|e^{\lambda_{1}(1-p)t}\mathbb{P}_{X_{pt}}(T_{0}>(1-p)t)-\eta_{1}(X_{pt})\right|\leq\frac{a_{1}e^{-\gamma(1-p)t}}{1-a_{1}e^{-\gamma(1-p)t}}\eta_{1}(X_{pt}).
\end{equation*}
Thus, using the Markov property, we obtain
\begin{eqnarray*}
&&\left|\frac{e^{\lambda_{1} t}\mathbb{E}_{x}\left[g(X_{pt}){\bf{1}}_{\{T_{0}>t\}}\right]}{\eta_{1}(x)}-\frac{e^{\lambda_{1} pt}\mathbb{E}_{x}\left[g(X_{pt})\eta_{1}(X_{pt}){\bf{1}}_{\{T_{0}>pt\}}\right]}{\eta_{1}(x)}\right|\\
&=&\left|\frac{e^{\lambda_{1} pt}\mathbb{E}_{x}\left[g(X_{pt}){\bf{1}}_{\{T_{0}>pt\}}\cdot e^{\lambda_{1}(1-p)t} \mathbb{E}_{X_{pt}}({\bf{1}}_{\{T_{0}>(1-p)t\}})\right]}{\eta_{1}(x)}-\frac{e^{\lambda_{1} pt}\mathbb{E}_{x}\left[g(X_{pt})\eta_{1}(X_{pt}){\bf{1}}_{\{T_{0}>pt\}}\right]}{\eta_{1}(x)}\right|\\
&=&\left|\frac{e^{\lambda_{1} pt}\mathbb{E}_{x}\left[g(X_{pt}){\bf{1}}_{\{T_{0}>pt\}}\cdot e^{\lambda_{1}(1-p)t}\mathbb{P}_{X_{pt}}(T_{0}>(1-p)t)\right]}{\eta_{1}(x)}-\frac{e^{\lambda_{1} pt}\mathbb{E}_{x}\left[g(X_{pt})\eta_{1}(X_{pt}){\bf{1}}_{\{T_{0}>pt\}}\right]}{\eta_{1}(x)}\right|\\
&\leq&\|g\|_{\infty}\frac{e^{\lambda_{1} pt}\mathbb{E}_{x}\left[{\bf{1}}_{\{T_{0}>pt\}}\left|e^{\lambda_{1}(1-p)t}\mathbb{P}_{X_{pt}}(T_{0}>(1-p)t)-\eta_{1}(X_{pt})\right|\right]}{\eta_{1}(x)}\\
&\leq&\|g\|_{\infty}\frac{a_{1}e^{-\gamma(1-p)t}}{1-a_{1}e^{-\gamma(1-p)t}}\frac{e^{\lambda_{1} pt}\mathbb{E}_{x}\left[\eta_{1}(X_{pt}){\bf{1}}_{\{T_{0}>pt\}}\right]}{\eta_{1}(x)}\\
&=&\|g\|_{\infty}\frac{a_{1}e^{-\gamma(1-p)t}}{1-a_{1}e^{-\gamma(1-p)t}}\left(\frac{e^{\lambda_{1} pt}}{\eta_{1}(x)}\cdot P_{pt}\eta_{1}(x)\right)\\
&=&\|g\|_{\infty}\frac{a_{1}e^{-\gamma(1-p)t}}{1-a_{1}e^{-\gamma(1-p)t}}\left(\frac{e^{\lambda_{1} pt}}{\eta_{1}(x)}\cdot e^{-\lambda_{1} pt}\eta_{1}(x)\right)\\
&=&\frac{a_{1}e^{-\gamma(1-p)t}}{1-a_{1}e^{-\gamma(1-p)t}}\|g\|_{\infty}.
\end{eqnarray*}
According to Proposition \ref{prop 2.2}, the $Q$-process $Y$ is strongly ergodic, consequently there exist two positive constants $C'', \gamma$ such that, for any $x\in(0,\infty)$ and $t\geq0$,
\begin{equation}
\label{5.333}
\left\|\mathbb{Q}_{x}(Y_t\in \cdot)-\beta\right\|_{TV}\leq C''e^{-\gamma t}.
\end{equation}
So, from (\ref{2.5}) and (\ref{5.333}), for any $x\in(0,\infty)$ and $t\geq0$, we have
\begin{eqnarray*}
&&\left|\frac{e^{\lambda_{1} pt}\mathbb{E}_{x}\left[g(X_{pt})\eta_{1}(X_{pt}){\bf{1}}_{\{T_{0}>pt\}}\right]}{\eta_{1}(x)}-\beta(g)\right|\\
&=&\left|\widetilde{P}_{pt}g(x)-\beta(g)\right|\\
&\leq&C''e^{-\gamma pt}\|g\|_{\infty}.
\end{eqnarray*}
Therefore, we get
\begin{eqnarray*}
&&\left|\mathbb{E}_{x}\left[g(X_{pt})|T_{0}>t\right]-\beta(g)\right|\\
&\leq&\frac{a_{1}e^{-\gamma t}}{1-a_{1}e^{-\gamma t}}\|g\|_{\infty}+\frac{a_{1}e^{-\gamma(1-p)t}}{1-a_{1}e^{-\gamma(1-p)t}}\|g\|_{\infty}+C''e^{-\gamma pt}\|g\|_{\infty}\\
&\leq&\frac{2a_{1}e^{-\gamma(1-p)t}}{1-a_{1}e^{-\gamma(1-p)t}}\|g\|_{\infty}+C''e^{-\gamma pt}\|g\|_{\infty}.
\end{eqnarray*}
So, there exists a constant $a_{2}>0$ such that, for all $t\geq0$,
\begin{equation}
\label{5.3}
\left|\mathbb{E}_{x}\left[g(X_{pt})|T_{0}>t\right]-\beta(g)\right|\leq a_{2}\|g\|_{\infty}(e^{-\gamma(1-p)t}+e^{-\gamma pt}).
\end{equation}
Set $s=pt$. From (\ref{5.3}), we have
\begin{eqnarray*}
\left|\mathbb{E}_{x}\left(\frac{1}{t}\int_{0}^{t}g(X_s)ds|T>t\right)-\beta(g)\right|
&=&\left|\mathbb{E}_{x}\left(\int_{0}^{1}g(X_{pt})dp|T_{0}>t\right)-\beta(g)\right|\\
&=&\left|\int_{0}^{1}[\mathbb{E}_{x}\left(g(X_{pt})|T_{0}>t\right)-\beta(g)]dp\right|\\
&\leq&\int_{0}^{1}\left|\mathbb{E}_{x}\left(g(X_{pt})|T_{0}>t\right)-\beta(g)\right|dp\\
&\leq&a_{2}\|g\|_{\infty}\int_{0}^{1}(e^{-\gamma(1-p)t}+e^{-\gamma pt})dp\\
&=&a_{2}\|g\|_{\infty}\frac{2(1-e^{-\gamma t})}{\gamma t}.
\end{eqnarray*}
This ends the proof of Proposition \ref{pro1.333}.

\section{Two examples}
\label{sect6}
In this section, we apply our main results to the following two processes arising in population dynamics.\\

{\bf{Example 1.}}
The first example is the logistic Feller diffusion process on $(0,\infty)$ killed at 0, which is defined
as the solution of the SDE
\begin{equation*}
dZ_t=\sqrt{\sigma Z_{t}}dB_t+\left(rZ_{t}-kZ^{2}_{t}\right)dt,~~~~~~~~~~Z_{0}=z>0,
\end{equation*}
where $\sigma,r,k$ are positive constants and $(B_{t})_{t\geq0}$ is the standard one-dimensional Brownian motion.
The logistic Feller diffusion process is a classic biological model and has strong biological background and applications. See \cite{CCLMMS09} for more information.
\par
Define $X_t:=2\sqrt{Z_{t}/\sigma}$. By It\^{o}'s formula, we get
\begin{eqnarray*}
dX_t=dB_t-\left(\frac{1}{2X_{t}}-\frac{rX_{t}}{2}+\frac{k\sigma X_{t}^{3}}{8}\right)dt,~~~~~X_0=x=2\sqrt{z/\sigma}>0.
\end{eqnarray*}
According to \cite[Lemma 3.3]{M14}, the point 0 is an exit boundary and the point $\infty$ is an entrance boundary for the process $X$. Therefore, Theorem \ref{thm2.1} holds for the process $X$, and then Theorem \ref{thm2.1} holds for the process $Z$. Further, if there exists a function $\psi:(0,\infty)\rightarrow[1,+\infty)$ such that $\alpha(\psi)<+\infty$ and $\alpha(\psi^{2}/\eta_1)<+\infty$, where $\alpha$ is the unique quasi-stationary distribution of the process $X$, then for any probability measure $\mu$ on $(0,\infty)$ satisfying $\mu\ll\alpha$, Theorem \ref{thm2.2} holds for the process $X$, so that for the process $Z$.\\

{\bf{Example 2.}} The second example is a process coming up naturally as a scaling limit of branching diffusions in a random
environment \cite{BH11,HM11}. It is defined as the solution of the SDE
\begin{equation}
\label{6.1}
dX_t=\left(rX_{t}-kX^{2}_{t}\right)dt+\gamma X_{t}dW_t+\sqrt{\sigma X_{t}}dB_t,~~~~~X_0>0,
\end{equation}
where $\gamma,\sigma,r,k$ are positive constants, $(W_{t})_{t\geq0}$ and $(B_{t})_{t\geq0}$ are independent Brownian motions.
\par
Because the quadratic variation of the process $X$ is
\begin{equation*}
d[X]_{t}=\left(\sigma X_{t}+\gamma^{2} X^{2}_{t}\right)dt,
\end{equation*}
there exists a Brownian motion $(U_{t})_{t\geq0}$ such that
\begin{equation}
\label{6.3}
dX_t=\left(rX_{t}-kX^{2}_{t}\right)dt+\sqrt{\sigma X_{t}+\gamma^{2} X^{2}_{t}}dU_{t}.
\end{equation}
According to \cite[Proposition 3.8]{HK19}, the point 0 is an exit boundary and the point $\infty$ is an entrance boundary for the diffusion
given by (\ref{6.3}). Therefore, Theorems \ref{thm2.1} and \ref{thm2.2} hold for the diffusion given by (\ref{6.1}).

\section*{Acknowledgements}
This work was supported by the National Natural Science Foundation of China (Grant No. 12001184), Outstanding Youth Project of Education Department of Hunan Province (Grant No. 19B307) and Natural Science Foundation of Hunan Province (Grant No. 2019JJ50289).





\baselineskip 0.23in

\end{document}